\newtheorem{theorem}{Theorem} [section]
\newtheorem{lemma}[theorem]{Lemma}
\theoremstyle{definition}
\newcommand{\R}{\mathbb{R}}
\let\oldbibliography\thebibliography
\renewcommand{\thebibliography}[1]{\oldbibliography{#1}
\setlength{\itemsep}{-0.5pt}}
\def\XXint#1#2#3{{\setbox0=\hbox{$#1{#2#3}{\int}$}
\vcenter{\hbox{$#2#3$}}\kern-.5\wd0}}
\tikzset{->-/.style={decoration={
				markings,
				mark=at position #1 with {\arrow{latex}}},postaction={decorate}}}
	\tikzset{-<-/.style={decoration={
				markings,
				mark=at position #1 with {\arrowreversed{latex}}},postaction={decorate}}}
\tikzset{cross/.style={cross out, draw, 
         minimum size=2*(#1-\pgflinewidth), 
         inner sep=0pt, outer sep=0pt}}
\tikzset{
	master/.style={
		execute at end picture={
			\coordinate (lower right) at (current bounding box.south east);
			\coordinate (upper left) at (current bounding box.north west);
		}
	},
	slave/.style={
		execute at end picture={
			\pgfresetboundingbox
			\path (upper left) rectangle (lower right);
		}
	}
}
\numberwithin{equation}{section}
\def\bigO{{\cal O}}
\newcommand{\oset}[3][0ex]{%
  \mathrel{\mathop{#3}\limits^{
    \vbox to#1{\kern-2\ex@
    \hbox{$\scriptstyle#2$}\vss}}}}
\begin{document}
\title{A point process on the unit circle with antipodal interactions}
\author{Christophe Charlier}

\maketitle

\begin{abstract}
We introduce the point process 
\begin{align*}
\frac{1}{Z_{n}}\prod_{1 \leq j < k \leq n} |e^{i\theta_{j}}+e^{i\theta_{k}}|^{\beta}\prod_{j=1}^{n} d\theta_{j}, \qquad \theta_{1},\ldots,\theta_{n} \in (-\pi,\pi], \quad \beta > 0,
\end{align*}
where $Z_{n}$ is the normalization constant. This point process is \textit{attractive}: it involves $n$ dependent, uniformly distributed random variables on the unit circle that attract each other. (For comparison, the well-studied C$\beta$E involves $n$ uniformly distributed random variables on the unit circle that repel each other.)

We consider linear statistics of the form $\sum_{j=1}^{n}g(\theta_{j})$ as $n \to \infty$, where $g\in C^{1,q}$ and $2\pi$-periodic. We prove that the leading order fluctuations around the mean are of order $n$ and given by $\smash{\big(g(U)-\int_{-\pi}^{\pi}g(\theta) \frac{d\theta}{2\pi}}\big)n$, where $U \sim \mathrm{Uniform}(-\pi,\pi]$. We also prove that the subleading fluctuations around the mean are of order $\sqrt{n}$ and of the form $\mathcal{N}_{\mathbb{R}}(0,4g'(U)^{2}/\beta)\sqrt{n}$, i.e. that the subleading fluctuations are given by a Gaussian random variable that itself has a random variance.

%We also derive large $n$ asymptotics for $Z_{n}$ (and some generalizations), up to and including the term of order $1$.

Our proof uses techniques developed by McKay and Isaev \cite{McKay, IsaevMcKay} to obtain asymptotics of related $n$-fold integrals.
\end{abstract}
\noindent
{\small{\sc AMS Subject Classification (2020)}: 41A60, 60G55.}

\noindent
{\small{\sc Keywords}: Smooth statistics, asymptotics, point processes, attractive interactions.}

%\tableofcontents

\section{Introduction}

Gibbs measures are models for collections of locally
dependent random points (or particles) and are important in various problems in probability and statistical physics \cite{DVJ2008}. Interactions between particles can be either attractive or repulsive. A well-known repulsive Gibbs measure is the circular $\beta$-ensemble (C$\beta$E), given by
\begin{align}\label{CbetaE}
\frac{1}{\tilde{Z}_{n}}\prod_{1 \leq j < k \leq n} |e^{i\theta_{j}}-e^{i\theta_{k}}|^{\beta}\prod_{j=1}^{n} d\theta_{j}, \qquad \theta_{1},\ldots,\theta_{n} \in (-\pi,\pi],
\end{align}
where $\tilde{Z}_{n}$ is the normalization constant. Here the $n$ points are confined to the unit circle and repel each other according to the two-dimensional Coulomb law at inverse temperature $\beta>0$. In order to maximize the density of \eqref{CbetaE}, the $n$ points must be evenly spaced on the unit circle. This point process has been widely studied, see e.g. \cite[Chapter 2]{For}.

\medskip In comparison, little is known about attractive point processes on the unit circle, and the purpose of this paper is to initiate the study of such a process. More precisely, we are interested in the joint probability measure
\begin{align}\label{new point process intro}
\frac{1}{Z_{n}}\prod_{1 \leq j < k \leq n} |e^{i\theta_{j}}+e^{i\theta_{k}}|^{\beta}\prod_{j=1}^{n} d\theta_{j}, \qquad \theta_{1},\ldots,\theta_{n} \in (-\pi,\pi], 
\end{align}
where $Z_{n}$ is the normalization constant. This point process is indeed an attractive Gibbs measure, because the density of \eqref{new point process intro} is maximized for point configurations of the form $(e^{i\theta_{1}},\ldots,e^{i\theta_{n}})=(e^{i\theta}, \ldots, e^{i\theta})$ with $\theta \in (-\pi,\pi]$.

\medskip In view of \eqref{CbetaE} and \eqref{new point process intro}, it is also natural to consider
\begin{align}\label{Mirror-type}
\frac{1}{\widehat{Z}_{n}}\prod_{1 \leq j < k \leq n} |e^{i\theta_{j}}-e^{-i\theta_{k}}|^{\beta}\prod_{j=1}^{n} d\theta_{j}, \qquad \theta_{1},\ldots,\theta_{n} \in (-\pi,\pi], 
\end{align}
where $\widehat{Z}_{n}$ is the normalization constant. The point process \eqref{Mirror-type} is attractive, but also features a repulsion with the real line: for $n\geq 3$, only the point configurations $(e^{i\theta_{1}},\ldots,e^{i\theta_{n}})=(i,\ldots,i)$ and $(e^{i\theta_{1}},\ldots,e^{i\theta_{n}})=(-i,\ldots,-i)$ maximize the density of \eqref{Mirror-type}.

\medskip By rotational symmetry, the random variables $e^{i\theta_{1}},\ldots,e^{i\theta_{n}}$ of both \eqref{CbetaE} and \eqref{new point process intro} are uniformly distributed on the unit circle (but not independently distributed). In the case of \eqref{CbetaE}, these random variables repel each other, while in the case of \eqref{new point process intro} they attract each other. On the other hand, for the point process \eqref{Mirror-type}, the individual distributions of $e^{i\theta_{1}},\ldots,e^{i\theta_{n}}$ are not uniform if $n \geq 3$.

\medskip In addition to providing concrete examples of attractive point processes on the unit circle, the point processes \eqref{new point process intro} and \eqref{Mirror-type} are also valuable from a mathematical point of view, because they can be studied rigorously as $n\to \infty$ using results from \cite{McKay, IsaevMcKay} and \cite{McKayWormald}, respectively (we comment more on this below).

\medskip Both \eqref{new point process intro} and \eqref{Mirror-type} can also be seen as repulsive point processes of a new kind, where the points $e^{i\theta_{1}},\ldots,e^{i\theta_{n}}$ do not repel each other, but are repelled by some ``image points". Indeed, the points $e^{i\theta_{1}},\ldots,e^{i\theta_{n}}$ of \eqref{new point process intro} are repelled by the image points $-e^{i\theta_{1}},\ldots,-e^{i\theta_{n}}$ obtained by reflection across the origin, and the points $e^{i\theta_{1}},\ldots,e^{i\theta_{n}}$ of \eqref{Mirror-type} are repelled by the image points  $e^{-i\theta_{1}},\ldots,e^{-i\theta_{n}}$ obtained by reflection across the real line. For these reasons, we say that \eqref{new point process intro} is a point process ``with antipodal interactions", and that \eqref{Mirror-type} is a point process ``with mirror-type interactions" (where the real line plays the role of the mirror).

\medskip In this paper we focus on the point process \eqref{new point process intro} with antipodal interactions. The other point process \eqref{Mirror-type} is studied in the companion paper \cite{C ReflectionLine}. Further comparisons between \eqref{CbetaE}, \eqref{new point process intro} and \eqref{Mirror-type} are provided at the end of this section. 

\medskip Our first result shows that for large $n$, all points of \eqref{new point process intro} cluster together in an arc of length $\bigO(n^{-\frac{1}{2}+\epsilon})$ with overwhelming probability, see also Figure \ref{fig:antipodal}. More precisely, we have the following.
\begin{theorem}\label{thm:prob}
Fix $\beta > 0$. For any $\epsilon \in (0,\frac{1}{8})$, there exists $c>0$ such that, for all large enough $n$,
\begin{align*}
\mathbb{P}\bigg( |e^{i\theta_{j}}-e^{i\theta_{n}}|\leq n^{-\frac{1}{2}+\epsilon} \mbox{ for all } j  \in  \{1,\ldots,n-1\} \bigg)  \geq 1-e^{-cn^{2\epsilon}}.
\end{align*} 
\end{theorem}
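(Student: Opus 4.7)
My strategy is to combine a pointwise Gaussian upper bound on the joint density with a matching Gaussian lower bound on $Z_{n}$, so that in their ratio the Gaussian normalisation cancels and leaves a clean Gaussian tail factor. The starting point is the elementary inequality $\log\cos(x/2)\le -x^{2}/8$ on $[0,\pi]$, which yields
\[
\prod_{1\leq j<k\leq n}|e^{i\theta_{j}}+e^{i\theta_{k}}|^{\beta} \;\le\; 2^{\beta n(n-1)/2}\exp\!\Big(-\tfrac{\beta}{8}\sum_{1\le j<k\le n}d(\theta_{j},\theta_{k})^{2}\Big),
\]
with $d(\cdot,\cdot)$ the circular distance. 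On configurations lying in a common half-circle, the right-hand exponent is a positive semidefinite quadratic form in the $\theta_{j}$'s with a one-dimensional kernel (the rotational direction), and the identity $\sum_{j<k}(\theta_{j}-\theta_{k})^{2}=n\sum_{j}(\theta_{j}-\bar\theta)^{2}$ identifies its orthogonal complement as an $(n-1)$-dimensional Gaussian with coordinate variance of order $1/n$.

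I would proceed in four steps. First, by rotation invariance I reduce to the slice $\theta_{n}=0$; choosing $\theta=0$ in the existential clause (admissible because the $j=n$ constraint is automatic there) and comparing chord with arc distances shows that on this slice the bad event forces $\max_{j<n}|\theta_{j}|>c\,n^{-1/2+\epsilon}$ for some absolute $c>0$. Second, I dispose of the macroscopic tail $\{\exists j_{0}:|\theta_{j_{0}}|>\pi/4\}$: a linear-in-$n$ number of pair factors $|e^{i\theta_{j_{0}}}+e^{i\theta_{k}}|$ are then uniformly bounded away from $2$, costing the weight a factor $e^{-cn}$ relative to the maximum, which more than suffices. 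Third, on the microscopic bulk $\max|\theta_{j}|\le\pi/4$, where the circular distance equals the ordinary distance, I apply the quadratic upper bound combined with a one-dimensional Gaussian tail estimate and a union bound over the $n-1$ coordinates to produce an upper bound of the shape $(\text{Gaussian normalisation})\cdot n\,e^{-cn^{2\epsilon}}$ for the bad-slice integral. Fourth, I lower-bound $Z_{n}$ by restricting its defining integral to a Gaussian-scale box $\{|\theta_{j}|\le c_{0}/\sqrt n\}$; on this box the quartic remainder in $\log\cos(x/2)$ contributes only a multiplicative $O(1)$ factor because $\sum d^{4}=O(1)$, giving a matching lower bound $c_{1}\cdot 2^{\beta n(n-1)/2}(\text{Gaussian normalisation})$. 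The Gaussian normalisations cancel and leave $\mathbb{P}(\text{bad})\le c_{2}\,n\,e^{-cn^{2\epsilon}}\le e^{-c'n^{2\epsilon}}$ for all $n$ large.

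The hard part is calibrating Steps 3 and 4 so that the Gaussian normalisations truly cancel: a crude volume bound $(2\pi)^{n}$ on the bad slice would fail by a super-polynomial factor, so one must integrate the Gaussian density explicitly against the bad event using the eigenstructure of the quadratic form $nI-J$, and one must simultaneously localise the denominator on a box where the non-Gaussian Taylor corrections $x^{4}/192+\cdots$ in $\log\cos(x/2)$ remain controlled. This controlled Laplace analysis around the degenerate (one-parameter) maximum manifold is precisely the setting for which the McKay--Isaev framework cited in the abstract is designed.
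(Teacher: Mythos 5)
Your Steps 1 and 3 are sound and close in spirit to what the paper does: on the slice $\theta_{n}=0$ the bad event does force $\max_{j}|\theta_{j}|>n^{-\frac12+\epsilon}$, and since under the Gaussian $\propto\exp(-\frac{\beta}{8}\boldsymbol{\theta}'^{T}(nI-J)\boldsymbol{\theta}')$ each coordinate has variance $\frac{8}{\beta n}$, the union bound does give $(\text{Gaussian normalisation})\cdot n\,e^{-cn^{2\epsilon}}$ for the microscopic bad region. Step 2, however, has a genuine gap. The claim that linearly many factors $|e^{i\theta_{j_{0}}}+e^{i\theta_{k}}|$ are bounded away from $2$ is false: all points other than $\theta_{n}=0$ may cluster near $\theta_{j_{0}}$, in which case the only small factor involving $j_{0}$ is the single pair $(j_{0},n)$ (the linearly many small factors are then the pairs $(k,n)$, which do not involve $j_{0}$). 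More importantly, even the corrected statement --- linearly many small factors in total, hence density $\leq 2^{\beta\binom{n}{2}}e^{-cn}$ --- does not suffice, by exactly the pitfall you flag in your last paragraph: the slice normalisation is $\asymp 2^{\beta\binom{n}{2}}\sqrt{n}\,(8\pi/\beta n)^{\frac{n-1}{2}}$, smaller than $(\sup\text{ density})\times(2\pi)^{n-1}$ by a factor $\asymp n^{-\frac{n-1}{2}}=e^{-\frac{n-1}{2}\log n}$, so an $e^{-cn}$ saving against the supremum leaves a ratio growing like $e^{\frac{n}{2}\log n-cn}$. To kill the macroscopic region you must either gain $e^{-cn^{1+\epsilon}}$ (this is what \eqref{first bound} achieves when at least $n^{\epsilon}$ points are far from the majority arc, since then of order $n^{1+\epsilon}$ factors are small), or keep the Gaussian quadratic form among the near points and pay only $e^{\bigO(m_{2}\log n)}$ for the $m_{2}<n^{\epsilon}$ far ones, as in \eqref{lol11}--\eqref{I2 bound}; your sketch does neither.

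Step 4 also fails as stated. Restricting to the box $\{|\theta_{j}|\leq c_{0}/\sqrt{n}\}$ does not give a constant-factor match with the Gaussian normalisation: under the slice Gaussian the coordinates have variance $\frac{8}{\beta n}$ and pairwise correlation $\frac12$ (write $\theta_{j}=\frac{2}{\sqrt{\beta n}}(\xi_{0}+\xi_{j})$ with i.i.d.\ standard normals), so the probability that all $n-1$ of them lie in a box of a fixed multiple of the standard deviation is at most $p^{n-1}$ with $p<1$, i.e.\ exponentially small in $n$. Your claimed lower bound $c_{1}\,2^{\beta n(n-1)/2}(\text{Gaussian normalisation})$ is therefore off by a factor $e^{\delta n}$, which destroys the target bound since $2\epsilon<1$. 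A repair needs a larger box (scale $\sqrt{\log n/n}$ or $n^{-\frac12+\epsilon}$), and there $\sum_{j<k}(\theta_{j}-\theta_{k})^{4}$ is no longer $\bigO(1)$ pointwise, so it must be controlled in Gaussian expectation (Markov) or via the machinery the paper actually invokes: Lemma \ref{lemma:abcd} (McKay--Isaev) yields $I_{1}'=\sqrt{n}\,(8\pi/\beta n)^{\frac{n-1}{2}}e^{-\frac{1}{2\beta}+o(1)}$, and Theorem \ref{thm:prob} then follows from $\mathbb{P}(\cdot)\geq I_{1}/I(0)$ combined with the bound $I(0)-I_{1}\leq e^{-c_{4}n^{2\epsilon}}I_{1}$ of \eqref{lol1}. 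So the architecture is right, but the two quantitative steps on which the cancellation of normalisations rests are precisely the ones that are not yet correct.
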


\begin{figure}[h]
\begin{center}
\begin{tikzpicture}[master]
\node at (0,0) {\includegraphics[width=3.5cm]{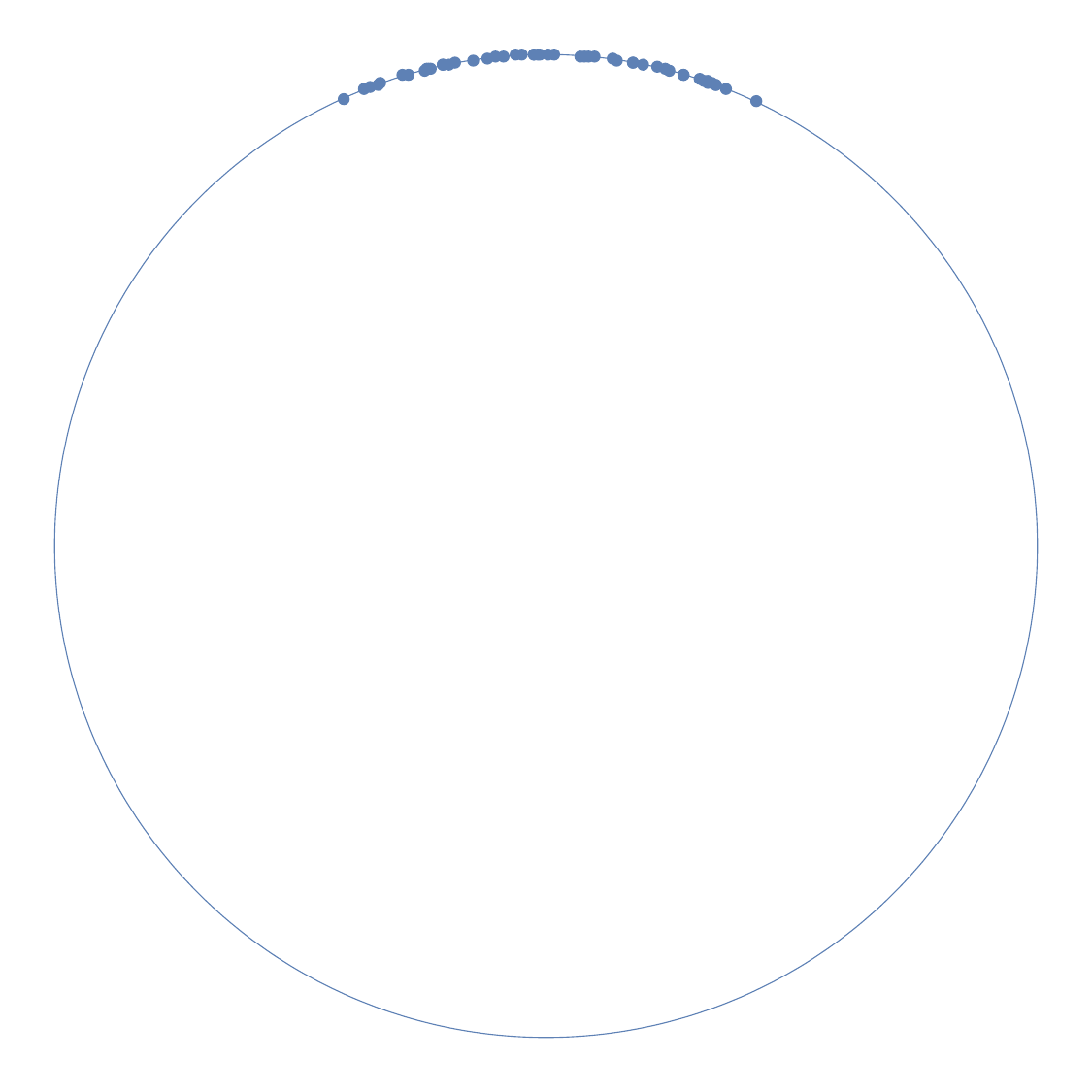}};
\draw[black,line width=0.15 mm,-<-=0.01,->-=1] ([shift=(60:1.35cm)]0,0) arc (60:120:1.35cm);
\node at (0,1) {$\bigO(n^{-\frac{1}{2}+\epsilon})$};
\end{tikzpicture} \hspace{0.5cm}
\begin{tikzpicture}[slave]
\node at (0,0) {\includegraphics[width=3.5cm]{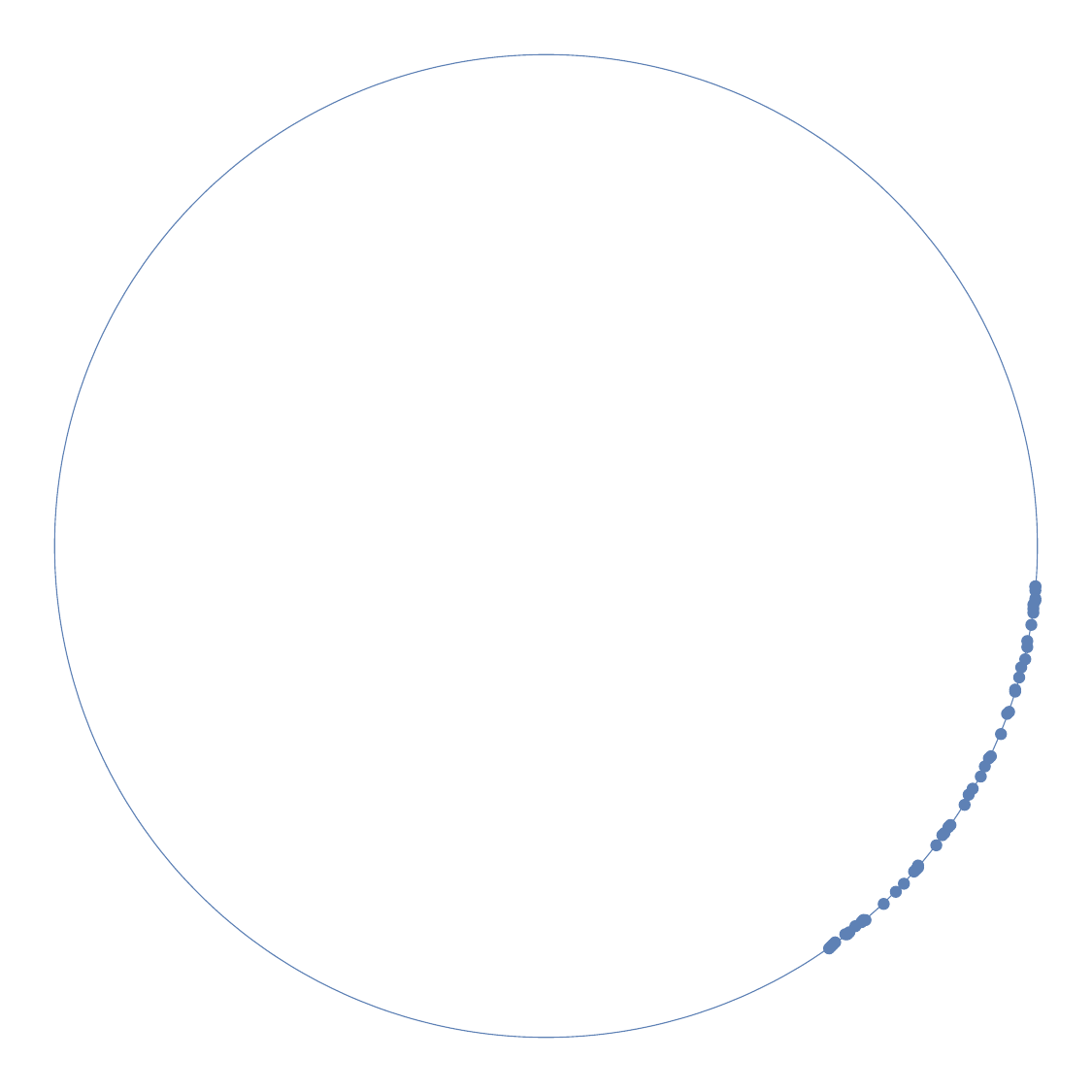}};
\draw[black,line width=0.15 mm,-<-=0.01,->-=1] ([shift=(-60:1.35cm)]0,0) arc (-60:-00:1.35cm);
\node at ($(-30:0.7)+(-0.1,0)$) {$\bigO(n^{-\frac{1}{2}+\epsilon})$};
\end{tikzpicture}
\caption{\label{fig:antipodal}Illustration of the point process \eqref{new point process intro} with $n=50$. With high probability all points are close to each other.}
\end{center}
\end{figure}

  In this paper, we study the fluctuations as $n \to \infty$ of linear statistics of the form $\sum_{j=1}^{n}g(\theta_{j})$, for fixed $\beta$ and where $g:\mathbb{R}\to \mathbb{R}$ is $2\pi$-periodic and sufficiently regular. More precisely, for Theorem \eqref{thm:almost sure}, $g$ is assumed to be continuous, but our other results (Theorems \ref{thm:main} and \ref{thm:moment generating function} below) require $g$ to be differentiable and such that $g'$ is H\"{o}lder continuous.

\medskip Let $\mu_{n} := \frac{1}{n}\sum_{j=1}^{n} \delta_{\theta_{j}}$ be the empirical measure of \eqref{new point process intro}. From the rotational symmetry of \eqref{new point process intro} together with Theorem \ref{thm:prob}, one expects the average empirical measure $\mathbb{E}[\mu_{n}]$ to converge as $n \to \infty$ to the uniform measure on $(-\pi,\pi]$ given by $\frac{d\theta}{2\pi}$. On the other hand, Theorem \ref{thm:prob} also implies that the support of $\mu_{n}$ will be contained inside an arc of length $\bigO(n^{-\frac{1}{2}+\epsilon})$ with overwhelming probability for large $n$. In other words, for large $n$ the measure $\mu_{n}$ ``deviates substantially" from $\mathbb{E}[\mu_{n}]$ with high probability, suggesting that $\mu_{n}$ has no deterministic limit as $n \to \infty$. The following result makes these ideas more precise.
\begin{theorem}\label{thm:almost sure}
Fix $\beta > 0$, and let $g:\mathbb{R}\to \mathbb{R}$ be $2\pi$-periodic and continuous. We have
\begin{align}\label{almost sure}
\int_{(-\pi,\pi]} g(x) d\mu_{n}(x) - g(\theta_{n}) \oset{\mathrm{a.s.}}{\underset{n\to \infty}{\xrightarrow{\hspace*{0.85cm}}}} 0.
\end{align}
Equivalently, $\frac{1}{n} \sum_{j=1}^{n} g(\theta_{j})- g(\theta_{n}) \oset{\mathrm{a.s.}}{\underset{n\to \infty}{\xrightarrow{\hspace*{0.85cm}}}} 0$. Since $\theta_{n} \sim \mathrm{Uniform}(-\pi,\pi]$, \eqref{almost sure} implies that
\begin{align}\label{Uniform one cut}
\int_{(-\pi,\pi]} g(x) d\mu_{n}(x) \oset{\mathrm{law}}{\underset{n\to \infty}{\xrightarrow{\hspace*{0.85cm}}}} \int_{(-\pi,\pi]} g(x) d\mu(x),
\end{align}
where $\mu := \delta_{U}$ and $U \sim \mathrm{Uniform}(-\pi,\pi]$. Equivalently, $\frac{1}{n} \sum_{j=1}^{n} g(\theta_{j})  \oset{\mathrm{law}}{\underset{n\to \infty}{\xrightarrow{\hspace*{0.85cm}}}} g(U)$.
\end{theorem}
\begin{proof}
The claim \eqref{almost sure} is a direct consequence of Theorem 1.1 and the Borel--Cantelli Lemma.
\end{proof}

\medskip Theorem \ref{thm:almost sure} deals only with the leading order fluctuations of $\sum_{j=1}^{n}g(\theta_{j})$. The subleading fluctuations are more intricate and will be given in Theorem \ref{thm:moment generating function} below. We will proceed by first establishing the large $n$ asymptotics of 
\begin{align}\label{main integral}
I(\tfrac{t}{\sqrt{n}}g) = \int_{-\pi}^{\pi} \dots \int_{-\pi}^{\pi}\prod_{1 \leq j < k \leq n} |e^{i\theta_{j}}+e^{i\theta_{k}}|^{\beta}\prod_{j=1}^{n} e^{\frac{t}{\sqrt{n}}g(\theta_{j})} d\theta_{j},
\end{align}
where $t$ lies in a compact subset of $\mathbb{R}$, see Theorem \ref{thm:main}. We will then derive the large $n$ asymptotics for the generating function of $\frac{1}{\sqrt{n}}\sum_{j=1}^{n}g(\theta_{j})$ simply from the formula 
\begin{align*}
\mathbb{E}\bigg[\exp\Big(\frac{t}{\sqrt{n}}\sum_{j=1}^{n} g(\theta_{j})\Big) \bigg] = \smash{\frac{I(\frac{t}{\sqrt{n}}g)}{I(0)}}.
\end{align*}
This paper is inspired by the works \cite{McKay,IsaevMcKay}. In the study of counting problems on graphs, McKay in \cite{McKay} introduced techniques to derive large $n$ asymptotics of several types of $n$-fold integrals, among which is
\begin{align}\label{nfold integral McKay and Wormald}
\frac{1}{(2\pi i)^{n}} \oint \dots \oint \frac{\prod_{1 \leq j <k \leq n}(z_{j}^{-1}z_{k}+z_{j}z_{k}^{-1})}{z_{1}z_{2}\dots z_{n}}dz_{1}\ldots dz_{n},
\end{align}
where each contour encloses the origin once anticlockwise. In recent years, a more systematic approach to such integrals was developed in \cite{IsaevMcKay}. The methods of \cite{McKay, IsaevMcKay} are remarkably robust and can be adjusted to analyze the integral \eqref{main integral} (despite the fact that the integrand in \eqref{main integral} is not analytic).

\medskip The following theorem gives a precise asymptotic formula, up to and including the constant term, for $I(\frac{t}{\sqrt{n}}g)$ as $n \to \infty$. %The special case $g=0$ of Theorem \ref{thm:main} establishes the asymptotics of $Z_{n}=I(0)$ as $n \to \infty$.
\begin{theorem}\label{thm:main}
Fix $\beta > 0$. Let $g:\mathbb{R}\to \mathbb{R}$ be $2\pi$-periodic and $C^{1,q}$, with $0<q\leq 1$, and let $t \in \mathbb{R}$. Then, for any fixed $0<\zeta< \frac{q}{2}$, as $n \to \infty$ we have
\begin{align}\label{asymptotic If}
I(\tfrac{t}{\sqrt{n}}g) = 2^{\beta \frac{n(n-1)}{2}} \bigg( \frac{8\pi}{\beta n} \bigg)^{\frac{n-1}{2}}\sqrt{n} \, e^{-\frac{1}{2\beta}} \big( 1+\bigO(n^{-\zeta}) \big) \int_{-\pi}^{\pi}  \exp \bigg( t\sqrt{n} g(\theta) + \frac{2 \, g'(\theta)^{2}}{\beta}t^{2} \bigg) d\theta,
\end{align}
uniformly for $t$ in compact subsets of $\mathbb{R}$. If $g\equiv 0$, then the error term can be improved: for any fixed $0<\zeta< 1$, we have
\begin{align}\label{asymptotic Zn}
Z_{n} = I(0) =  2^{\beta \frac{n(n-1)}{2}} \bigg( \frac{8\pi}{\beta n} \bigg)^{\frac{n-1}{2}}\sqrt{n} \, e^{-\frac{1}{2\beta}} 2\pi \big( 1+\bigO(n^{-\zeta}) \big), \qquad \mbox{as } n \to \infty.
\end{align}
\end{theorem}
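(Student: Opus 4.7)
\medskip\noindent\textbf{Proof plan.} The strategy is a localization-plus-Laplace analysis in the spirit of \cite{McKay, IsaevMcKay}. First, I factor out the rotational symmetry by the gauge-fixing change of variables $\theta := \theta_{1}$, $\phi_{j} := \theta_{j}-\theta_{1}$ for $j=2,\ldots,n$ (Jacobian $1$). Since $|e^{i\theta_{j}}+e^{i\theta_{k}}|^{\beta}=2^{\beta}|\cos(\frac{\phi_{j}-\phi_{k}}{2})|^{\beta}$ is $\theta$-independent and the $\phi$-integrand is $2\pi$-periodic in each $\phi_{j}$, this yields
\begin{equation*}
I(\tfrac{t}{n}g) = \int_{-\pi}^{\pi}d\theta\int_{(-\pi,\pi]^{n-1}}\prod_{1\leq j<k\leq n}|e^{i\phi_{j}}+e^{i\phi_{k}}|^{\beta}\prod_{j=1}^{n} e^{\frac{t}{n}g(\theta+\phi_{j})}\,d\phi_{2}\cdots d\phi_{n},\qquad \phi_{1}:=0,
\end{equation*}
and the $\theta$-integral will ultimately produce the factor $\int_{-\pi}^{\pi}e^{tg(\theta)}d\theta$. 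Next I restrict the $\phi$-integral to $\Omega_{n}:=\{|\phi_{j}|\leq n^{-1/2+\epsilon}\colon j=2,\ldots,n\}$ for a small but fixed $\epsilon>0$. Morally this is Theorem~\ref{thm:prob}; quantitatively one establishes it by dominating $\sum_{j<k}\log|\cos(\frac{\phi_{j}-\phi_{k}}{2})|$ globally in $\phi$ by its quadratic Taylor polynomial, using convexity of $-\log\cos$ on $(-\pi,\pi)$, along the lines of \cite{IsaevMcKay}.

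\medskip On $\Omega_{n}$ I Taylor-expand $\beta\log|\cos(x/2)| = -\tfrac{\beta}{8}x^{2} - \tfrac{\beta}{192}x^{4} + O(x^{6})$, so the $\phi$-integrand equals
\begin{equation*}
2^{\beta n(n-1)/2}\exp\Big[-\tfrac{\beta}{8}\phi^{T}M\phi - \tfrac{\beta}{192}S_{4}(\phi) + R(\phi)\Big]\prod_{j=1}^{n}e^{\frac{t}{n}g(\theta+\phi_{j})},
\end{equation*}
with $M = nI_{n-1} - J_{n-1}$, $S_{4}(\phi)=\sum_{j<k}(\phi_{j}-\phi_{k})^{4}$ and $R(\phi)=O\bigl(\sum_{j<k}(\phi_{j}-\phi_{k})^{6}\bigr)$. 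The eigenvalues of $M$ are $1$ (simple) and $n$ (multiplicity $n-2$), so $\det M = n^{n-2}$ and $M^{-1} = \tfrac{1}{n}(I+J)$; the leading Gaussian integral over $\R^{n-1}$ evaluates to $(\tfrac{8\pi}{\beta})^{(n-1)/2} n^{-(n-2)/2} = (\tfrac{8\pi}{\beta n})^{(n-1)/2}\sqrt{n}$, and extending from $\Omega_{n}$ to $\R^{n-1}$ only costs an $O(e^{-cn^{2\epsilon}})$ error because the Gaussian marginals have variance $\tfrac{8}{\beta n}$. The $O(1)$ correction in the prefactor comes from the quartic: under the Gaussian $\mathbb{Q}$ with covariance $\tfrac{4}{\beta n}(I+J)$ every pair satisfies $\mathrm{Var}_{\mathbb{Q}}(\phi_{j}-\phi_{k})=\tfrac{8}{\beta n}$, hence
\begin{equation*}
\mathbb{E}_{\mathbb{Q}}\Big[-\tfrac{\beta}{192}S_{4}\Big] = -\tfrac{\beta}{192}\cdot\binom{n}{2}\cdot 3\Big(\tfrac{8}{\beta n}\Big)^{2} \longrightarrow -\tfrac{1}{2\beta},
\end{equation*}
producing the $e^{-1/(2\beta)}$ factor. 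A short calculation shows that $\mathrm{Var}_{\mathbb{Q}}(S_{4})$ and $\mathbb{E}_{\mathbb{Q}}[|R|]$ are both $O(n^{-1})$, which gives the $O(n^{-\zeta})$ correction for any $\zeta<1$. Finally, H\"older continuity of $g$ gives $|g(\theta+\phi_{j})-g(\theta)|\leq C n^{-q/2+q\epsilon}$ on $\Omega_{n}$, so $\prod_{j}e^{\frac{t}{n}g(\theta+\phi_{j})} = e^{tg(\theta)}(1+O(n^{-q/2+q\epsilon}))$; this is the only place where the restriction $\zeta<q/2$ enters, and it is absent for $g\equiv 0$, which explains the sharper error in \eqref{asymptotic Zn}.

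\medskip The main obstacle is the localization to $\Omega_{n}$: the trivial volume bound $(2\pi)^{n-1}$ on the complement is vastly larger than the main term $\sim (\tfrac{8\pi}{\beta n})^{(n-1)/2}\sqrt{n}$, so one needs a global (non-Taylor) pointwise lower bound on $-\sum_{j<k}\log|\cos((\phi_{j}-\phi_{k})/2)|$ away from the clustering diagonal, controlled uniformly in the $2^{n-1}$-many sign patterns of the $\phi_{j}$. This is exactly what the McKay--Isaev convexity estimates provide, and once it is in hand the remainder of the proof is routine Gaussian calculus matching the prefactors and error terms in \eqref{asymptotic If} and \eqref{asymptotic Zn}.
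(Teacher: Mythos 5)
Your local analysis is essentially the paper's: anchor the cluster at one of the points, Taylor-expand $\beta\log|\cos\frac{x}{2}|$ to quartic order, do the Gaussian computation with $\det(nI_{n-1}-J_{n-1})=n^{n-2}$, pick up $e^{-1/(2\beta)}$ from $\mathbb{E}[S_{4}]$ (the paper packages exactly this Gaussian-plus-quartic step as Lemma \ref{lemma:abcd}, proved via \cite[Theorem 4.3]{IsaevMcKay}; your variance argument for the quartic is plausible but would still need the concentration input that theorem supplies, since $S_{4}$ is only $\bigO(n^{4\epsilon})$ pointwise on $\Omega_{n}$), and use H\"older continuity of $g$ to get the $n^{-q/2+q\epsilon}$ error, which is indeed the sole source of the restriction $\zeta<q/2$ and is absent for $g\equiv 0$.

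The genuine gap is the localization step, which you dispose of in one sentence but which occupies more than half of the paper's proof. Your claimed justification --- ``dominating $\sum_{j<k}\log|\cos(\frac{\phi_{j}-\phi_{k}}{2})|$ globally by its quadratic Taylor polynomial, using convexity of $-\log\cos$'' --- is false as stated. With $\phi_{1}=0$ and $\phi_{j}\in(-\pi,\pi]$, the pairwise differences range over $(-2\pi,2\pi)$, and $|\cos\frac{x}{2}|=|\cos\frac{x\bmod 2\pi}{2}|$ climbs back to $1$ as $|x|\to 2\pi$ (two points near $\pm\pi$ are adjacent on the circle), so the bound $|\cos\frac{x}{2}|\leq e^{-x^{2}/8}$ holds only for $|x|\leq\pi$, and $-\log|\cos\frac{x}{2}|$ is not convex across its singularity at $|x|=\pi$. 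Hence the complement of $\Omega_{n}$ cannot be handled by a Gaussian tail bound in your coordinates: the correct pointwise bound involves the wrapped differences, which do not form a quadratic form, and one must separately control configurations with a macroscopic number of far points, a few outliers at distance just above $n^{-1/2+\epsilon}$, and the soft mode in which the whole cluster drifts. This is precisely what the paper's covering argument does (reduction to $\widetilde{I}$ in \eqref{I tilde}--\eqref{domain of I tilde} via the sixteen arcs $A_{j}$, the scales $\tau,2\tau$ and the counts $s_{2}',s_{2}$ giving \eqref{first bound} and \eqref{I2 bound}, and the outlier analysis $I_{3}(\mathsf{h})$ with McKay's transformation $T$ and the bound $\mu_{2}>\frac{1}{4}n^{-1+2\epsilon}$ giving \eqref{I3 bound}, culminating in \eqref{lol1}). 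Note also that you cannot lean on Theorem \ref{thm:prob} here even ``morally'': in the paper it is a corollary of \eqref{lol1}, i.e.\ of this very estimate. Without an argument of this type your proof of \eqref{asymptotic If}--\eqref{asymptotic Zn} is incomplete.
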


Our next result on the generating function of $\frac{1}{\sqrt{n}}\sum_{j=1}^{n}g(\theta_{j})$ follows directly from Theorem \ref{thm:main}.
\begin{theorem}\label{thm:moment generating function}
Fix $\beta > 0$. Let $g:\mathbb{R}\to \mathbb{R}$ be $2\pi$-periodic and $C^{1,q}$, with $0<q\leq 1$, and let $t \in \mathbb{R}$. For any fixed $0<\zeta<\frac{q}{2}$, as $n \to \infty$ we have
\begin{align}
\mathbb{E}\bigg[e^{\frac{t}{\sqrt{n}} \sum_{j=1}^{n} g(\theta_{j})} \bigg] = \frac{I(\frac{t}{\sqrt{n}}g)}{I(0)} & = \frac{1+\bigO(n^{-\zeta})}{2\pi} \int_{-\pi}^{\pi}  \exp \bigg( t\sqrt{n} g(\theta) + \frac{2 \, g'(\theta)^{2}}{\beta}t^{2} \bigg) d\theta, \label{asymp for moment generating function}
\end{align}
uniformly for $t$ in compact subsets of $\mathbb{R}$.
\end{theorem}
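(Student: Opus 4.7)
The statement is a direct corollary of Theorem \ref{thm:main}, so the plan is short. First, from the joint density \eqref{new point process intro} and the definition \eqref{main integral} of $I(\tfrac{t}{n}g)$, the identity
\begin{align*}
\mathbb{E}\bigg[e^{\frac{t}{n}\sum_{j=1}^{n} g(\theta_{j})}\bigg] = \frac{I(\tfrac{t}{n}g)}{I(0)}
\end{align*}
is immediate from the definition of the expectation with respect to \eqref{new point process intro}. I would then substitute the asymptotic expansion \eqref{asymptotic If} into the numerator and \eqref{asymptotic Zn} into the denominator. The algebraic/exponential prefactor $2^{\beta n(n-1)/2}\big(\frac{8\pi}{\beta n}\big)^{(n-1)/2}\sqrt{n}\,e^{-1/(2\beta)}$ is common to both and cancels, leaving
\begin{align*}
\frac{I(\tfrac{t}{n}g)}{I(0)} = \frac{1+\bigO(n^{-\zeta})}{2\pi\,(1+\bigO(n^{-\zeta}))}\int_{-\pi}^{\pi} e^{t g(\theta)}\, d\theta.
\end{align*}

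Next, I would expand the ratio of the two error factors as $1+\bigO(n^{-\zeta})$ (via a simple geometric series / Taylor expansion of $\frac{1+a}{1+b}$ for small $a,b$), producing
\begin{align*}
\frac{I(\tfrac{t}{n}g)}{I(0)} = \frac{1}{2\pi}\int_{-\pi}^{\pi} e^{t g(\theta)} d\theta + \bigO(n^{-\zeta}) \cdot \frac{1}{2\pi}\int_{-\pi}^{\pi} e^{t g(\theta)} d\theta.
\end{align*}
Since $g$ is continuous and $2\pi$-periodic it is bounded, so the integral $\int_{-\pi}^{\pi} e^{t g(\theta)} d\theta$ is bounded uniformly for $t$ in any compact subset of $\mathbb{R}$. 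Absorbing this bounded factor into the error term then yields the claim
\begin{align*}
\frac{I(\tfrac{t}{n}g)}{I(0)} = \frac{1}{2\pi}\int_{-\pi}^{\pi} e^{t g(\theta)}\, d\theta + \bigO(n^{-\zeta}),
\end{align*}
uniformly for $t$ in compact subsets of $\mathbb{R}$.

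The only mild point to verify is that the $\bigO$ in \eqref{asymptotic If} is itself uniform in $t$ on compacts; this is part of the content of Theorem \ref{thm:main}, and the $\bigO$ in \eqref{asymptotic Zn} is of course $t$-independent. Consequently there is no genuine obstacle here: all the analytic work (the delicate McKay--Isaev-type analysis of the $n$-fold integral \eqref{main integral}) has already been carried out in the proof of Theorem \ref{thm:main}, and Theorem \ref{thm:moment generating function} is only a ratio-and-simplify corollary.
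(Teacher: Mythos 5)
Your proposal is correct and is exactly the argument the paper intends: the paper gives no separate proof, stating only that Theorem \ref{thm:moment generating function} ``follows directly from Theorem \ref{thm:main}'', and your ratio-and-simplify derivation (cancel the common prefactor, expand $\tfrac{1+\bigO(n^{-\zeta})}{1+\bigO(n^{-\zeta})}$, use boundedness of $\int_{-\pi}^{\pi}e^{t g(\theta)}d\theta$ uniformly for $t$ in compacts) is precisely that routine verification, with the uniformity in $t$ supplied by Theorem \ref{thm:main} as you note.
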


The asymptotic formula \eqref{asymp for moment generating function} can be rewritten as
\begin{align}\label{lol14}
\mathbb{E}\bigg[e^{\frac{t}{\sqrt{n}}\sum_{j=1}^{n} g(\theta_{j})} \bigg] = \big( 1+o(1) \big)\mathbb{E}[e^{t [\sqrt{n} g(U) + \mathcal{N}_{\mathbb{R}}(0,\frac{4g'(U)^{2}}{\beta})]}],
\end{align}
where $U \sim \mathrm{Uniform}(-\pi,\pi]$, and where $\mathcal{N}_{\mathbb{R}}(0,\frac{4g'(u)^{2}}{\beta}):=0$ if $g'(u)=0$. Informally, one can interpret \eqref{lol14} as
\begin{align}\label{antipodal asymptotics lol}
\sum_{j=1}^{n} g(\theta_{j}) = n \, g(U) + \sqrt{n} \; N_{U} + o(\sqrt{n}), \qquad \mbox{where } N_{U}\sim \mathcal{N}_{\R}(0,\tfrac{4g'(U)^{2}}{\beta}).
\end{align}
Therefore, for a non-constant $g$, Theorem \ref{thm:moment generating function} means that the leading order fluctuations of $\sum_{j=1}^{n}g(\theta_{j})$ around the mean $n\int_{-\pi}^{\pi}g(\theta)\frac{d\theta}{2\pi}$ are of order $n$ and given by $n(g(U)-\int_{-\pi}^{\pi}g(\theta)\frac{d\theta}{2\pi})$. Moreover, the subleading fluctuations are of order $\sqrt{n}$ and given by $\smash{\mathcal{N}_{\mathbb{R}}(0,\frac{4g'(U)^{2}}{\beta})\sqrt{n}}$, i.e. by a Gaussian random variable whose variance is itself random. 

\paragraph{Other point processes on the unit circle.}
It is interesting to compare \eqref{new point process intro} with other point processes on the unit circle, such as
\begin{itemize}
\item[(a)] \vspace{-0.05cm} the C$\beta$E \eqref{CbetaE}, i.e. $\tilde{Z}_{n}^{-1}\prod_{j < k}|e^{i\theta_{j}}-e^{i\theta_{k}}|^{\beta}\prod_{j=1}^{n} d\theta_{j}$,
\item[(b)] \vspace{-0.15cm} the point process \eqref{Mirror-type}, i.e. $\widehat{Z}_{n}^{-1}\prod_{j < k} |e^{i\theta_{j}}-e^{-i\theta_{k}}|^{\beta}\prod_{j=1}^{n} d\theta_{j}$.
\end{itemize}
In sharp contrast with \eqref{new point process intro}, the empirical measure $\frac{1}{n}\sum_{j=1}^{n} \delta_{\theta_{j}}$ of the C$\beta$E converges almost surely to the uniform measure on $(-\pi,\pi]$, the associated smooth linear statistics have Gaussian fluctuations of order $1$, and the test function only affects the variance of this Gaussian. More informally, for the C$\beta$E we have
\begin{align}\label{lol16}
\sum_{j=1}^{n}g(\theta_{j}) = n \int_{-\pi}^{\pi} g(\theta) \frac{d\theta}{2\pi} + \mathcal{N}_{\mathbb{R}}(0,\sigma^{2}) + o(1), \qquad \mbox{as } n \to \infty
\end{align}
where $\sigma^{2} = \frac{4}{\beta} \sum_{k=1}^{\infty} k |g_{k}|^{2}$ and $g_{k} := \int_{-\pi}^{\pi}g(\theta)e^{-ik\theta}\frac{d\theta}{2\pi}$, see \cite{Jo1988}. Many other point processes with different types of repulsive interactions have been considered in the literature, see e.g. \cite{For, Boursier}. It is typically the case for such processes that (i) the associated empirical measures have deterministic limits, and (ii) the smooth statistics have Gaussian leading order fluctuations. The point process (b) listed above is very different: in fact, just like \eqref{new point process intro}, its empirical measure $\mu_{n}^{b}$ has no deterministic limit. Indeed, it is shown in \cite{C ReflectionLine} that if $g:\mathbb{R}\to \mathbb{R}$ is $2\pi$-periodic, bounded, and $C^{2,q}$ in neighborhoods of $\frac{\pi}{2}$ and $-\frac{\pi}{2}$ with $0<q\leq 1$, then
\begin{align*}%\label{Bernoulli one cut}
\int_{(-\pi,\pi]} g(x) d\mu_{n}^{b}(x) \oset{\mathrm{law}}{\underset{n\to \infty}{\xrightarrow{\hspace*{0.85cm}}}} \int_{(-\pi,\pi]} g(x) d\mu^{b}(x),
\end{align*}
where $\mu^{b} = B \delta_{\frac{\pi}{2}} + (1-B) \delta_{-\frac{\pi}{2}}$ and $B \sim \mathrm{Bernoulli} (\frac{1}{2})$ (i.e. $\mathbb{P}(B=1)=\mathbb{P}(B=0)=1/2$). In the generic case where $g(\frac{\pi}{2}) \neq g(-\frac{\pi}{2})$ and $g'(\frac{\pi}{2}) \neq g'(-\frac{\pi}{2})$, it is also proved in \cite{C ReflectionLine} that the leading order fluctuations of the smooth linear statistics of (b) are of order $n$ and purely Bernoulli, and that the subleading fluctuations are of order $1$ and of the form $BN_{1}+(1-B)N_{2}$, where $N_{1},N_{2}$ are two Gaussian random variables, and $N_{1},N_{2},B$ are independent from each other. Informally, the results from \cite{C ReflectionLine} can be rewritten as
\begin{align}\label{lol15}
\sum_{j=1}^{n} g(\theta_{j}) = n \Big(g(\tfrac{\pi}{2})B + g(-\tfrac{\pi}{2})(1-B)\Big) + BN_{1} + (1-B)N_{2} + o(1), \qquad \mbox{as } n \to \infty.
\end{align}
It is interesting to compare \eqref{antipodal asymptotics lol}, \eqref{lol16} and \eqref{lol15}. In particular, for the point processes (a) and (b), there are no fluctuations of order $\sqrt{n}$.

Another difference between (b) and \eqref{new point process intro} is the following: for (b), there are some non-generic test functions for which the leading order fluctuations around the mean are not of order $n$ (if $g(\frac{\pi}{2})=g(-\frac{\pi}{2})$), but of order $1$ or even of order $o(1)$. In comparison, for \eqref{new point process intro}, the only scenario where the leading order fluctuations are not of order $n$ corresponds to the trivial situation where $g$ is a constant, in which case there are no fluctuations at all.

\paragraph{Conclusion.} In this paper, we studied the smooth linear statistics of \eqref{new point process intro}. We proved formula \eqref{asymp for moment generating function} concerning the leading and subleading order fluctuations of $\sum_{j=1}^{n}g(\theta_{j})$. There are other problems concerning \eqref{new point process intro} that are also of interest for future research, for example:
\begin{itemize}
\item In this paper $\beta>0$ is fixed. The asymptotic formula \eqref{asymp for moment generating function} suggests that a critical transition occurs when $\beta \asymp n^{-\frac{1}{2}}$. It would be interesting to figure that out.
\end{itemize}

\section{Preliminary lemma}\label{section:prelim}
%In \cite{McKayWormald}, a method was developed to obtain large $n$ asymptotics of \eqref{nfold integral McKay and Wormald}. As mentioned, the method of \cite{McKayWormald} can actually be adapted to handle the analysis of \eqref{main integral}. 

Define
\begin{align*}
& U_{n}(t) = \{\boldsymbol{x} \in \mathbb{R}^{n}: |x_{i}| \leq t, i=1,\ldots,n\} \quad \mbox{for } t \geq 0.
\end{align*}
The following lemma is proved using techniques from \cite{McKay, IsaevMcKay} and will be used in Section \ref{section:proof} to obtain large $n$ asymptotics for $I(\frac{t}{\sqrt{n}}g)$.
\begin{lemma}\label{lemma:abcd}
Let $0<\epsilon<\frac{1}{8}$, $a>0$, $b \in \mathbb{R}$, $c \in \mathbb{R}$,  and $n \geq 2$ be an integer. Define
\begin{align*}
& J = \int_{U_{n-1}(n^{-\frac{1}{2}+\epsilon})} \exp \bigg( -a \sum_{1\leq j < k \leq n}(\theta_{j}-\theta_{k})^{2} + b \sum_{1\leq j < k \leq n}(\theta_{j}-\theta_{k})^{4} + \frac{c}{\sqrt{n}} \sum_{j=1}^{n-1}\theta_{j} \bigg) \prod_{j=1}^{n-1}d\theta_{j},
\end{align*}
where the integral is over $\boldsymbol{\theta}':=(\theta_{1},\ldots,\theta_{n-1}) \in U_{n-1}(n^{-\frac{1}{2}+\epsilon})$ with $\theta_{n} = 0$. Then, as $n \to \infty$,
\begin{align}\label{asymp of Jcal}
J = n^{1/2} \bigg( \frac{\pi}{a n} \bigg)^{\frac{n-1}{2}} \exp \bigg( \frac{c^{2}}{4a} + \frac{3b}{2a^{2}} + \bigO\big( n^{-1+8\epsilon} \big) \bigg).
\end{align}
\end{lemma}
\begin{proof}
If $c=0$, and if the error term $\bigO\big( n^{-1+8\epsilon} \big)$ in \eqref{asymp of Jcal} is replaced by the weaker estimate $\bigO\big( n^{-\frac{1}{2}+4\epsilon} \big)$, then the statement directly follows from \cite[Theorem 2.1]{McKay}. 

Changing variables $\theta_{j} = \alpha_{j}+\frac{c}{2a\sqrt{n}}$, $j=1,\ldots,n-1$, we get
\begin{align*}
J & = \exp \bigg( \frac{n-1}{n}\frac{c^{2}}{4a} + \frac{bc^{4}}{16a^{4}} \frac{n-1}{n^{2}} \bigg) \int_{-\frac{c}{2a\sqrt{n}}\mathbf{1}+U_{n-1}(n^{-\frac{1}{2}+\epsilon})} \exp \bigg( -a \sum_{1\leq j < k \leq n}(\alpha_{j}-\alpha_{k})^{2} \\
& + b \sum_{1\leq j < k \leq n}(\alpha_{j}-\alpha_{k})^{4} + \sum_{j=1}^{n-1}\bigg[ \frac{2bc}{a}\frac{\alpha_{j}^{3}}{\sqrt{n}} + \frac{3bc^{2}}{2a^{2}} \frac{\alpha_{j}^{2}}{n} + \frac{bc^{3}}{2a^{3}} \frac{\alpha_{j}}{n^{3/2}} \bigg] \bigg) \prod_{j=1}^{n-1}d\alpha_{j},
\end{align*}
where $\mathbf{1}=(1,\ldots,1)\in \mathbb{R}^{n-1}$ and $\alpha_{n}:=0$. For $(\alpha_{1},\ldots,\alpha_{n-1})\in -\frac{c}{2a\sqrt{n}}\mathbf{1}+U_{n-1}(n^{-\frac{1}{2}+\epsilon})$, we have
\begin{align*}
\sum_{j=1}^{n-1}\bigg[ \frac{2bc}{a}\frac{\alpha_{j}^{3}}{\sqrt{n}} + \frac{3bc^{2}}{2a^{2}} \frac{\alpha_{j}^{2}}{n} + \frac{bc^{3}}{2a^{3}} \frac{\alpha_{j}}{n^{3/2}} \bigg] = \bigO(n^{-1+3\epsilon}), \qquad \mbox{as } n \to \infty,
\end{align*}
and thus (resetting $\alpha_{j} = \theta_{j}$)
\begin{align}
J & = \exp \bigg( \frac{c^{2}}{4a} + \bigO(n^{-1+3\epsilon}) \bigg) \nonumber \\
& \times \int_{-\frac{c}{2a\sqrt{n}}\mathbf{1}+U_{n-1}(n^{-\frac{1}{2}+\epsilon})} \exp \bigg( -a \sum_{1\leq j < k \leq n}(\theta_{j}-\theta_{k})^{2}  + b \sum_{1\leq j < k \leq n}(\theta_{j}-\theta_{k})^{4} \bigg) \prod_{j=1}^{n-1}d\theta_{j}. \label{lol13}
\end{align}
Since the integrand in \eqref{lol13} is positive, and since
\begin{align*}
U_{n-1}(\tfrac{1}{2}n^{-\frac{1}{2}+\epsilon})\subset -\frac{c}{2a\sqrt{n}}\mathbf{1}+U_{n-1}(n^{-\frac{1}{2}+\epsilon}) \subset U_{n-1}(2n^{-\frac{1}{2}+\epsilon})
\end{align*}
holds for all sufficiently large $n$, to conclude the proof it remains to prove the claim for $c=0$.

Assume from now on that $c=0$. As mentioned, if $\bigO\big( n^{-1+8\epsilon} \big)$ in \eqref{asymp of Jcal} is replaced by $\bigO\big( n^{-\frac{1}{2}+4\epsilon} \big)$, then the statement follows from \cite[Theorem 2.1]{McKay}. The improved error term can be proved using the general method from \cite{IsaevMcKay}. Let $\mathbf{1}=(1,\ldots,1)^{T} \in \mathbb{R}^{n}$ and define
\begin{align*}
& \Omega = U_{n}(n^{-\frac{1}{2}+\epsilon}), \quad F(\boldsymbol{x}) = \exp \bigg( - a\sum_{1 \leq j<k\leq n}(x_{j}-x_{k})^{2} + b\sum_{1\leq j<k \leq n}(x_{j}-x_{k})^{4} \bigg), \\
& Q\boldsymbol{x} = \boldsymbol{x}-x_{n}\boldsymbol{1}, \quad W\boldsymbol{x} = \frac{\sqrt{a}}{\sqrt{n}}(x_{1}+\ldots+x_{n})\boldsymbol{1}, \quad P\boldsymbol{x}=\boldsymbol{x}-\frac{1}{n}(x_{1}+\ldots+x_{n})\boldsymbol{1}, \quad R\boldsymbol{x}=\frac{1}{\sqrt{a n}}\boldsymbol{x}.
\end{align*}
Clearly, $F(Q\boldsymbol{x})=F(\boldsymbol{x})$, $\dim \ker Q = 1$, $\dim \ker W=n-1$, $\ker Q \, \cap \, \ker W = \{\boldsymbol{0}\}$ and $\mathrm{span}(\ker Q, \ker W)=\mathbb{R}^{n}$. Applying \cite[Lemma 4.6]{IsaevMcKay} with $\rho = \sqrt{a}n^{\epsilon}$, $\rho_{1}=\rho_{2}=n^{-\frac{1}{2}+\epsilon}$, we obtain
\begin{align*}
J & = \int_{U_{n-1}(n^{-\frac{1}{2}+\epsilon})}F(\boldsymbol{\theta})d\boldsymbol{\theta}' = \int_{\Omega \cap Q(\mathbb{R}^{n})}F(\boldsymbol{y})d\boldsymbol{y} \\
& = (1-K)^{-1}\pi^{-\frac{1}{2}}\det(Q^{T}Q+W^{T}W)^{1/2} \int_{\Omega_{\rho}}F(\boldsymbol{x})e^{-\boldsymbol{x}^{T}W^{T}W\boldsymbol{x}}d\boldsymbol{x},
\end{align*}
where $\det(Q^{T}Q+W^{T}W)^{1/2}=\sqrt{a}n$,
\begin{align*}
& \Omega_{\rho} = \{\boldsymbol{x} \in \mathbb{R}^{n}:Q\boldsymbol{x} \in \Omega \mbox{ and } W\boldsymbol{x}\in U_{n}(\rho)\}, \\
& 0 \leq K < \min(1,ne^{-\frac{\rho^{2}}{\kappa^{2}}}) = ne^{-an^{1+2\epsilon}}, \qquad \kappa = \sup_{W \boldsymbol{x} \neq 0} \frac{\|W\boldsymbol{x}\|_{\infty}}{\|W\boldsymbol{x}\|_{2}} = \frac{1}{\sqrt{n}}.
\end{align*}
We thus have $J = (1+\bigO(e^{-c n^{1+2\epsilon}})) \frac{\sqrt{a}n}{\sqrt{\pi}} \int_{\Omega_{\rho}}F(\boldsymbol{x})e^{-a(x_{1}+\ldots+x_{n})^{2}}d\boldsymbol{x}$. Since
\begin{align*}
& \frac{\rho_{1}}{\|Q\|_{\infty}} = \frac{n^{-\frac{1}{2}+\epsilon}}{2}, \qquad \frac{\rho}{\|W\|_{\infty}} =  \frac{\sqrt{a}n^{\epsilon}}{\sqrt{a n}}= n^{-\frac{1}{2}+\epsilon}, \\
&  \|P\|_{\infty}\rho_{2} + \|R\|_{\infty} \rho = \frac{2(n-1)}{n} n^{-\frac{1}{2}+\epsilon} + \frac{1}{\sqrt{a n}}\sqrt{a}n^{\epsilon} \leq 3n^{-\frac{1}{2}+\epsilon},
\end{align*}
we also obtain from \cite[Lemma 4.6]{IsaevMcKay} that 
\begin{align*}
U_{n}(\tfrac{1}{2}n^{-\frac{1}{2}+\epsilon}) \subseteq \Omega_{\rho} \subseteq U_{n}(3n^{-\frac{1}{2}+\epsilon}).
\end{align*}
Furthermore, a direct computation gives
\begin{align*}
F(\boldsymbol{x})e^{-a(x_{1}+\ldots+x_{n})^{2}} = \exp \bigg( - an\sum_{j=1}^{n}x_{j}^{2} +b \sum_{1\leq j<k \leq n}(x_{j}-x_{k})^{4} \bigg).
\end{align*}
Let $\mathsf{f}(\boldsymbol{x})= b\sum_{j<k}(x_{j}-x_{k})^{4}$, and let $\boldsymbol{X}$ be a Gaussian random variable with density $(\frac{a n}{\pi})^{\frac{n}{2}}e^{-a n \boldsymbol{x}^{T}\boldsymbol{x}}$. Note that $|\partial \mathsf{f}/\partial x_{j}| = \bigO(n^{-\frac{1}{2}+3\epsilon})$ for $j=1,\ldots,n$ and $\boldsymbol{x} \in \Omega_{\rho}$, and that
\begin{align*}
\mathbb{E}[\mathsf{f}(\boldsymbol{X})] & = b \,  \mathbb{E}\bigg[ (n^{2}-4n)x_{1}^{4}+3 \bigg( \sum_{j=1}^{n}x_{j}^{2} \bigg)^{2} \bigg] = b \bigg( (n^{2}-n) \mathbb{E}\big[ x_{1}^{4} \big] + 3n(n-1) \mathbb{E}\big[x_{1}^{2}\big]^{2} \bigg) = \frac{3b(n-1)}{2 a^{2} n}.
\end{align*}
Applying \cite[Theorem 4.3]{IsaevMcKay} with $A=a nI$, $T=\frac{1}{\sqrt{a n}}I$, $\rho_{1}=\frac{\sqrt{a}}{2}n^{\epsilon},\rho_{2}=3\sqrt{a} \, n^{\epsilon}$, $\phi_{1},\phi_{2} \asymp n^{-\frac{1}{2}+4\epsilon}$ (with the functions $f,g$ and $h$ in \cite[Theorem 4.3]{IsaevMcKay} equal to $\mathsf{f}$, $\mathsf{f}$ and $0$ here, respectively, and with $\Omega$ in \cite[Theorem 4.3]{IsaevMcKay} equal to $\Omega_{\rho}$ here), we then find
\begin{align*}
J & = (1+\bigO(e^{-c n^{1+2\epsilon}})) \frac{\sqrt{a}n}{\sqrt{\pi}} \big( 1+\bigO(n^{-1+8\epsilon}) \big) \bigg( \frac{\pi}{a n} \bigg)^{\frac{n}{2}} e^{\frac{3b(n-1)}{2 a^{2} n}}, \qquad \mbox{as } n \to \infty,
\end{align*}
and \eqref{asymp of Jcal} follows.
\end{proof}

\section{Proof of Theorems \ref{thm:prob} and \ref{thm:main}}\label{section:proof}
We start with Theorem \ref{thm:main}. Our proof is inspired by \cite[Proof of Theorem 3.1]{McKay}. 

\noindent Using $|e^{i\theta_{j}}+e^{i\theta_{k}}|^{\beta}=2^{\beta}\big|\cos \tfrac{\theta_{j}-\theta_{k}}{2}\big|^{\beta}$, we first rewrite $I(\frac{t}{\sqrt{n}}g)$ as
\begin{align}\label{main integral proof}
I(\tfrac{t}{\sqrt{n}}g) = 2^{\beta \frac{n(n-1)}{2}}\int_{-\pi}^{\pi} \dots \int_{-\pi}^{\pi}\prod_{1 \leq j < k \leq n} |\cos \tfrac{\theta_{j}-\theta_{k}}{2}|^{\beta}\prod_{j=1}^{n} e^{\frac{t}{\sqrt{n}}g(\theta_{j})} d\theta_{j}.
\end{align}
%We will begin the evaluation of $I(\frac{t}{n}f)$ with the part of the integrand which will turn out to give the major contribution. 
Given $x \in \mathbb{R}$, let $x \,\mathrm{mod}\, 2\pi$ be such that $x \,\mathrm{mod}\, 2\pi = x + 2\pi k$, $k\in \mathbb{Z}$ and $x \,\mathrm{mod}\, 2\pi \in (-\pi,\pi]$. Fix $\theta_{n} \in (-\pi,\pi]$ and $\epsilon\in (0,\frac{1}{8})$, and let $I_{1}$ be the contribution to $I(\frac{t}{\sqrt{n}}g)$ of those $\boldsymbol{\theta}$ such that $|(\theta_{j}-\theta_{n}) \,\mathrm{mod}\, 2\pi|\leq n^{-\frac{1}{2}+\epsilon}$ for $1\leq j \leq n$, i.e.
\begin{align*}
& I_{1} := 2^{\beta \frac{n(n-1)}{2}}\int_{-\pi}^{\pi} e^{\frac{t}{\sqrt{n}}g(\theta_{n})} \bigg[ \int_{\theta_{n} + U_{n-1}(n^{-\frac{1}{2}+\epsilon})} \prod_{1 \leq j < k \leq n} |\cos \tfrac{\theta_{j}-\theta_{k}}{2}|^{\beta}\prod_{j=1}^{n-1} e^{\frac{t}{\sqrt{n}}g(\theta_{j})} d\theta_{j} \bigg]d\theta_{n},
\end{align*}
where $\theta_{n} + U_{n-1}(n^{-\frac{1}{2}+\epsilon})$ is equal to
\begin{align*}
\{\boldsymbol{\theta}'=(\theta_{1},\ldots,\theta_{n-1})\in (-\pi,\pi]^{n-1}: |(\theta_{j}-\theta_{n}) \,\mathrm{mod}\, 2\pi|\leq n^{-\frac{1}{2}+\epsilon} \mbox{ for } 1\leq j \leq n-1\}.
\end{align*}
Since $g \in C^{1,q}$, as $\theta \to \theta_{n}$ we have
\begin{align}
& \log\big[ 2^{\beta}\,|\hspace{-0.04cm}\cos \tfrac{\theta-\theta_{n}}{2}|^{\beta} \big] = \beta \log 2 - \frac{\beta}{8}(\theta-\theta_{n})^{2} - \frac{\beta}{192}(\theta-\theta_{n})^{4} + \bigO((\theta-\theta_{n})^{6}), \nonumber \\
& g(\theta) = g(\theta_{n}) + g'(\theta_{n})(\theta-\theta_{n}) + \bigO(|\theta-\theta_{n}|^{1+q}), \label{expansion of g}
\end{align}
where the implied constants are independent of $\theta_{n}\in (-\pi,\pi]$. For $\boldsymbol{\theta}'=(\theta_{1},\ldots,\theta_{n-1}) \in \theta_{n}+U_{n-1}(n^{-\frac{1}{2}+\epsilon})$, 
\begin{align}\label{some error in the proof}
& \bigO\bigg(\sum_{1\leq j<k \leq n}(\theta_{j}-\theta_{k})^{6}\bigg) = \bigO(n^{-1+6\epsilon}), \quad \bigO\bigg(\frac{1}{\sqrt{n}}\sum_{j=1}^{n-1} (\theta_{j}-\theta_{n})^{1+q}\bigg) = \bigO(n^{-\frac{q}{2}+(1+q) \epsilon}), \quad n \to \infty,
\end{align}
and thus 
\begin{multline*}
\prod_{1 \leq j < k \leq n} |\cos \tfrac{\theta_{j}-\theta_{k}}{2}|^{\beta}\prod_{j=1}^{n} e^{\frac{t}{\sqrt{n}}g(\theta_{j})} = e^{t \sqrt{n} \, g(\theta_{n})} \exp \bigg( - \frac{\beta}{8}\sum_{1 \leq j<k \leq n}(\theta_{j}-\theta_{k})^{2} \\
 - \frac{\beta}{192}\sum_{1 \leq j<k \leq n}(\theta_{j}-\theta_{k})^{4} + \frac{tg'(\theta_{n})}{\sqrt{n}}\sum_{j=1}^{n-1}(\theta_{j}-\theta_{n}) + \bigO(n^{-\frac{q}{2}+(1+q) \epsilon}+n^{-1+6\epsilon}) \bigg),
\end{multline*}
as $n \to \infty$, uniformly for $\theta_{n}\in (-\pi,\pi]$, for $\boldsymbol{\theta}' \in \theta_{n}+U_{n-1}(n^{-\frac{1}{2}+\epsilon})$, and for $t$ in compact subsets of $\mathbb{R}$. Because the integrand is positive, we then find
\begin{align}
I_{1} & = 2^{\beta \frac{n(n-1)}{2}}\exp \Big( \bigO(n^{-\frac{q}{2}+(1+q) \epsilon}+n^{-1+6\epsilon}) \Big) \int_{-\pi}^{\pi} e^{t \sqrt{n} \, g(\theta_{n})} I_{1}'(\theta_{n}) d\theta_{n}, \label{lol8} \\
I_{1}'(\theta_{n}) & := \int_{\theta_{n}+U_{n-1}(n^{-\frac{1}{2}+\epsilon})} \exp \bigg( - \frac{\beta}{8}\sum_{1 \leq j<k \leq n}(\theta_{j}-\theta_{k})^{2} - \frac{\beta}{192}\sum_{1 \leq j<k \leq n}(\theta_{j}-\theta_{k})^{4} \nonumber \\
& + \frac{tg'(\theta_{n})}{\sqrt{n}}\sum_{j=1}^{n-1}(\theta_{j}-\theta_{n}) \bigg) \prod_{j=1}^{n-1} d\theta_{j}, \nonumber
\end{align}
as $n \to \infty$ uniformly for $t$ in compact subsets of $\mathbb{R}$. %It is easy to see from the change of variables $\theta_{j} \to \theta_{j} + \theta_{n}$, $j=1,\ldots,n-1$ that $I_{1}'(\theta_{n})$ is independent of $\theta_{n}$. 
Using Lemma \ref{lemma:abcd} with $a=\frac{\beta}{8}$, $b=-\frac{\beta}{192}$ and $c=tg'(\theta_{n})$, we obtain
\begin{align}\label{I1prim asymp}
I_{1}'(\theta_{n}) = n^{\frac{1}{2}}\bigg( \frac{8\pi}{\beta n} \bigg)^{\frac{n-1}{2}} \exp \bigg( -\frac{1}{2\beta} + \frac{2t^{2}g'(\theta_{n})^{2}}{\beta} + \bigO(n^{-1+8\epsilon}) \bigg).
\end{align}
Substituting the above in \eqref{lol8} yields
\begin{align}
I_{1} & = 2^{\beta \frac{n(n-1)}{2}} \bigg( \frac{8\pi}{\beta n} \bigg)^{\frac{n-1}{2}} n^{\frac{1}{2}} e^{-\frac{1}{2\beta}} \exp \Big( \bigO(n^{-\frac{q}{2}+(1+q) \epsilon}+n^{-1+8\epsilon}) \Big) \nonumber \\
& \times \int_{-\pi}^{\pi} e^{t \sqrt{n} \, g(\theta_{n}) + \frac{2g'(\theta_{n})^{2}}{\beta}t^{2}} d\theta_{n}, \qquad \mbox{as } n \to \infty. \label{I1 asymp}
\end{align}
The rest of the proof consists in showing that $I(\frac{t}{\sqrt{n}}g)-I_{1}$ is negligible in comparison to $I_{1}$. Let $\tau = \pi/8$, and for $j \in \{-15,-14,\ldots,16\}$, define $A_{j} = ((j-1)\frac{\tau}{2},j\frac{\tau}{2}]$. For any $\boldsymbol{\theta} \in (-\pi,\pi]^{n}$, at least one of the $16$ intervals $A_{16}\cup A_{-15}, \; A_{-14}\cup A_{-13},  \ldots, A_{0}\cup A_{1},\ldots,A_{14}\cup A_{15}$ contains $\geq n/16$ of the $\theta_{j}$. Thus
\begin{align}\label{I tilde}
I(\tfrac{t}{\sqrt{n}}g)-I_{1} \leq 16 \widetilde{I}, \qquad \widetilde{I} := 2^{\beta \frac{n(n-1)}{2}} e^{ \sqrt{n} M(tg)}\int_{\mathcal{J}} \prod_{1 \leq j < k \leq n} |\cos \tfrac{\theta_{j}-\theta_{k}}{2}|^{\beta} \prod_{j=1}^{n} d\theta_{j},
\end{align}
where $M(tg) = \max_{\theta \in (-\pi,\pi]}tg(\theta)$ and 
\begin{align}\label{domain of I tilde}
\mathcal{J} = \{\boldsymbol{\theta}\in(-\pi,\pi]^{n}: \boldsymbol{\theta}' \notin \theta_{n}+U_{n-1}(n^{-\frac{1}{2}+\epsilon}) \mbox{ and } \# \{\theta_{j} \in A_{0}\cup A_{1}\} \geq \tfrac{n}{16} \}.
\end{align}
Define $S_{0}'=S_{0}'(\boldsymbol{\theta})$, $S_{1}'=S_{1}'(\boldsymbol{\theta})$ and $S_{2}'=S_{2}'(\boldsymbol{\theta})$ by
\begin{align*}
& S_{0}' = \{j : |\theta_{j}| \leq \tfrac{\tau}{2}\}, & & S_{1}' = \{j : \tfrac{\tau}{2} < |\theta_{j}| \leq \tau\}, & & S_{2}' = \{j: \tau<|\theta_{j}| \leq \pi\},
\end{align*}
and let $s_{2}' = \# S_{2}'$. If $j \in S_{2}'$ and $k \in S_{0}'$, then $|\cos \frac{\theta_{j}-\theta_{k}}{2}| \leq \cos(\tau/4)$. Thus the contribution to $\widetilde{I}$ of all the cases where $s_{2}' \geq n^{\epsilon}$ is at most 
\begin{align}\label{first bound}
2^{\beta \frac{n(n-1)}{2}}e^{ \sqrt{n} M(tg)} (\cos\tfrac{\tau}{4})^{\frac{\beta}{16}n^{1+\epsilon}}(2\pi)^{n} \leq \exp(-c_{1}n^{1+\epsilon})I_{1}
\end{align}
for some $c_{1}>0$ and for all sufficiently large $n$. Let us now define $S_{0}=S_{0}(\boldsymbol{\theta})$, $S_{1}=S_{1}(\boldsymbol{\theta})$ and $S_{2}=S_{2}(\boldsymbol{\theta})$ by
\begin{align*}
& S_{0} = \{j : |\theta_{j}| \leq \tau\}, & & S_{1} = \{j : \tau < |\theta_{j}| \leq 2\tau\}, & & S_{2} = \{j: 2\tau<|\theta_{j}| \leq \pi\},
\end{align*}
and let $s_{0} = \# S_{0}$, $s_{1} = \# S_{1}$ and $s_{2} = \# S_{2}$. The case $s_{2}'=s_{1}+s_{2} \geq n^{\epsilon}$ is already handled by \eqref{first bound}, and we now focus on the case $s_{2}'=s_{1}+s_{2}< n^{\epsilon}$. Let $I_{2}(m_{2})$ be the contribution to $\widetilde{I}$ of those $\boldsymbol{\theta}$ such that $s_{2}(\boldsymbol{\theta})=m_{2}$ and $s_{1}(\boldsymbol{\theta})+s_{2}(\boldsymbol{\theta})< n^{\epsilon}$. For $\boldsymbol{\theta} \in (-\pi,\pi]^{n}$, we have
\begin{align}\label{bounds}
\big|\cos \tfrac{\theta_{j}-\theta_{k}}{2}\big|^{\beta} \leq \begin{cases}
\exp(-\frac{\beta}{8}(\theta_{j}-\theta_{k})^{2}), & \mbox{if } j,k \in S_{0}\cup S_{1}, \\
(\cos \frac{\tau}{2})^{\beta}, & \mbox{if } j\in S_{0}, k\in S_{2}, \\
1, & %\mbox{otherwise},
\end{cases}
\end{align}
where we have used the fact that $|\cos \tfrac{x}{2}| \leq \exp (-\tfrac{x^{2}}{8})$ holds for all $|x| \leq \pi$. Thus
\begin{align*}
\prod_{1 \leq j < k \leq n} |\cos \tfrac{\theta_{j}-\theta_{k}}{2}|^{\beta} \leq \exp \bigg( -\frac{\beta}{8}\sum_{\substack{1 \leq j<k \leq n\\j,k \in S_{0}\cup S_{1}}}(\theta_{j}-\theta_{k})^{2} - \alpha s_{0} s_{2} \bigg)
\end{align*}
with $\alpha := -\beta \log \cos \frac{\tau}{2}$, and we find
\begin{align}
I_{2}(m_{2}) & \leq 2^{\beta \frac{n(n-1)}{2}} e^{ \sqrt{n} M(tg)} \binom{n}{m_{2}} \int_{|\theta_{1}|,\ldots,|\theta_{m_{2}}| \in (2\tau,\pi]}  \int_{\substack{|\theta_{m_{2}+1}|,\ldots,|\theta_{n}| \leq 2\tau \\ s_{0}(\boldsymbol{\theta}) \geq n-n^{\epsilon}}} \prod_{1 \leq j < k \leq n} |\cos \tfrac{\theta_{j}-\theta_{k}}{2}|^{\beta} \prod_{j=1}^{n} d\theta_{j} \nonumber \\
& \leq 2^{\beta \frac{n(n-1)}{2}} e^{\sqrt{n} \, M(tg)} e^{-\alpha m_{2}(n-n^{\epsilon})}(2\pi-4\tau)^{m_{2}} \binom{n}{m_{2}} I_{2}'(n-m_{2}), \label{lol11}
\end{align}
with
\begin{align}
& I_{2}'(m) = \int_{U_{m}(2\tau)} \exp \bigg( -\frac{\beta}{8}\sum_{1 \leq j<k \leq m}(\theta_{j}-\theta_{k})^{2} \bigg) \prod_{j=1}^{m}d\theta_{j} \leq 4\tau I_{2}''(m), \label{lol10} \\
& I_{2}''(m) = \int_{U_{m-1}(4\tau)} \exp \bigg( -\frac{\beta}{8}\sum_{1 \leq j<k \leq m}(\theta_{j}-\theta_{k})^{2} \bigg) \prod_{j=1}^{m-1}d\theta_{j}, \nonumber
\end{align}
and where in the definition of $I_{2}''(m)$ we set $\theta_{m}:=0$. We can estimate $I_{2}''(m)$ using the linear transformation $T$ of \cite[Proof of Theorem 2.1]{McKay}. This transformation $T:\mathbb{R}^{m-1}\to \mathbb{R}^{m-1}$ is defined by $T : (\theta_{1},\ldots,\theta_{m-1}) \mapsto \boldsymbol{y}=(y_{1},\ldots,y_{m-1})$,
where
\begin{align*}
y_{j} = \theta_{j} - \sum_{k=1}^{m-1}\frac{\theta_{k}}{m+m^{1/2}}, \qquad 1 \leq j \leq m-1.
\end{align*}
Let $\mu_{k} := \sum_{j=1}^{m-1}y_{j}^{k}$ for $k \geq 1$ and $V_{1} := T(U_{m-1}(4\tau))$. The following identities hold:
\begin{align}
& V_{1} = \{\boldsymbol{y}\in \mathbb{R}^{m-1} : |y_{j}+\mu_{1}/(m^{1/2}+1)| \leq 4\tau \mbox{ for } 1 \leq j \leq m-1 \}, \label{lol1 2} \\
& \mu_{1} = m^{-1/2} \sum_{j=1}^{m-1}\theta_{j}, \label{lol2 2} \\
& \sum_{1\leq j < k \leq m} (\theta_{j}-\theta_{k})^{2} = m \mu_{2}, \label{lol3 2} \\
& (\det T)^{-1} = m^{1/2}. \label{lol5 2}
\end{align}
%By \eqref{lol2 2}, $|\mu_{1}| \leq 4\tau \sqrt{m}$ holds for all $\boldsymbol{y} \in V_{1}$. Using also \eqref{lol1 2}, we obtain $V_{1} \subseteq U_{m-1}(8\tau)$, and then $\mu_{2} \leq (8\tau)^{2}m$. 
Hence, by \eqref{lol3 2} and \eqref{lol5 2}, and since the integrand of $I_{2}''(m)$ is positive,
\begin{align*}
I_{2}''(m) & = \sqrt{m}\int_{V_{1}} \exp \bigg( \hspace{-0.1cm} -\frac{\beta}{8} m\mu_{2} \bigg) \prod_{j=1}^{m-1}dy_{j} \leq \sqrt{m}\int_{\R^{m-1}} \hspace{-0.1cm} \exp \bigg( \hspace{-0.1cm}-\frac{\beta}{8}m\mu_{2} \bigg) \prod_{j=1}^{m-1}dy_{j} \leq \sqrt{m} \bigg( \frac{8\pi}{\beta m} \bigg)^{\frac{m-1}{2}}.
\end{align*}
Substituting the above in \eqref{lol10} and \eqref{lol11} yields $I_{2}'(m) \leq 4\tau \sqrt{m} \big( \frac{8\pi}{\beta m} \big)^{\frac{m-1}{2}}$ and
\begin{align*}
& I_{2}'(n-m_{2}) \leq e^{\bigO(m_{2}\log n)} \bigg( \frac{8\pi}{\beta n} \bigg)^{\frac{n-1}{2}}, \\
& I_{2}(m_{2}) \leq 2^{\beta \frac{n(n-1)}{2}} e^{ \sqrt{n} M(tg)} e^{-\alpha m_{2}(n-n^{\epsilon})}(2\pi-4\tau)^{m_{2}} n^{m_{2}}e^{\bigO(m_{2}\log n)} \bigg( \frac{8\pi}{\beta n} \bigg)^{\frac{n-1}{2}} \leq \exp\Big(-\frac{\alpha}{2} m_{2}n\Big)I_{1},
\end{align*}
for all sufficiently large $n$. Hence
\begin{align}\label{I2 bound}
\sum_{m_{2}=1}^{n^{\epsilon}}I_{2}(m_{2}) \leq \exp\Big(-c_{2}n\Big)I_{1}
\end{align}
for some $c_{2}>0$ and all large enough $n$. It only remains to analyze $I_{2}(0)$.  Let $I_{3}(\mathsf{h})$ be the contribution to $\widetilde{I}$ of those $\boldsymbol{\theta}$ such that 
\begin{itemize}
\item[(i)] $\#\{j:n^{-\frac{1}{2}+\epsilon} < |\theta_{j}-\theta_{n}| \leq 4\tau \} = \mathsf{h}$,
\item[(ii)] \vspace{-0.15cm} $\#\{j:|\theta_{j}-\theta_{n}| \leq n^{-\frac{1}{2}+\epsilon} \}=n-\mathsf{h}$, and
\item[(iii)] \vspace{-0.15cm} $|\theta_{n}| \leq 2\tau$.
\end{itemize}
By \eqref{I tilde} and \eqref{domain of I tilde}, and because the integrand is positive, we have $I_{2}(0) \leq \sum_{\mathsf{h}=1}^{n-1}I_{3}(\mathsf{h})$ and
\begin{align*}
I_{3}(\mathsf{h}) \leq 4\tau 2^{\beta \frac{n(n-1)}{2}}e^{ \sqrt{n} M(tg)} I_{3}'(\mathsf{h}), \quad \mbox{ with } \quad I_{3}'(\mathsf{h}) = \int_{\mathcal{J}_{\mathsf{h}}} \prod_{1 \leq j < k \leq n} |\cos \tfrac{\theta_{j}-\theta_{k}}{2}|^{\beta} \prod_{j=1}^{n-1}d\theta_{j},
\end{align*}
where in the definition of $I_{3}'(\mathsf{h})$ we set $\theta_{n}=0$ and
\begin{align*}
\mathcal{J}_{\mathsf{h}} = \{\boldsymbol{\theta}' \in U_{n-1}(4\tau): \#\{\theta_{j}:n^{-\frac{1}{2}+\epsilon} < |\theta_{j}| \leq 4\tau \} = \mathsf{h} \}.
\end{align*}
For $\boldsymbol{\theta}' \in U_{n-1}(4\tau)$, we have $\big|\cos \tfrac{\theta_{j}-\theta_{k}}{2}\big|^{\beta} \leq \exp(-\frac{\beta}{8}(\theta_{j}-\theta_{k})^{2})$ and thus 
\begin{align*}
I_{3}'(\mathsf{h}) \leq I_{3}''(\mathsf{h}), \quad \mbox{ where } \quad I_{3}''(\mathsf{h}) = \int_{\mathcal{J}_{\mathsf{h}}} \prod_{1 \leq j < k \leq n} \exp\bigg(-\frac{\beta}{8}(\theta_{j}-\theta_{k})^{2}\bigg) \prod_{j=1}^{n-1}d\theta_{j}.
\end{align*}
Applying the transformation $T$ defined above (but with $m$ replaced by $n$), we obtain
\begin{align*}
I_{3}''(\mathsf{h}) = \sqrt{n}\int_{T(\mathcal{J}_{\mathsf{h}})} \exp \bigg( \hspace{-0.1cm} -\frac{\beta}{8} n\mu_{2} \bigg) \prod_{j=1}^{n-1}dy_{j},
\end{align*}
where now $\mu_{k} := \sum_{j=1}^{n-1}y_{j}^{k}$, $k \geq 1$. The set $T(\mathcal{J}_{\mathsf{h}})$ consists of all $\boldsymbol{y} \in \mathbb{R}^{n-1}$ such that
\begin{itemize}
\item[(i)] $n^{-\frac{1}{2}+\epsilon}<|y_{j} + \frac{\mu_{1}}{\sqrt{n}+1}| \leq 4\tau$ for $\mathsf{h}$ values of $j$, and
\item[(ii)] \vspace{-0.15cm} $|y_{j} + \frac{\mu_{1}}{\sqrt{n}+1}| \leq n^{-\frac{1}{2}+\epsilon}$ for $n-1-\mathsf{h}$ values of $j$.
\end{itemize}
Since $\mathsf{h}\geq 1$, we easily conclude from (i) that any $\boldsymbol{y}\in T(\mathcal{J}_{\mathsf{h}})$ satisfies either $|\mu_{1}| > n^{\epsilon}/2$ or $|y_{j}| > n^{-\frac{1}{2}+\epsilon}/2$ for at least $\mathsf{h}$ values of $j$. Thus $\mu_{2}>\frac{1}{4}n^{-1+2\epsilon}$ holds for all $\boldsymbol{y}\in T(\mathcal{J}_{\mathsf{h}})$, which implies that
\begin{align*}
I_{3}''(\mathsf{h}) \leq \sqrt{n}\int_{\R^{n-1} \cap \{\mathbf{y}: \mu_{2}>\frac{1}{4}n^{-1+2\epsilon}\}} \exp \bigg( \hspace{-0.1cm} -\frac{\beta}{8} n\mu_{2} \bigg) \prod_{j=1}^{n-1}dy_{j} \leq \sqrt{n}\big( \frac{8\pi}{\beta n} \big)^{\frac{n-1}{2}} \exp(-c_{3}'n^{2\epsilon})
\end{align*}
for large $n$ and some $c_{3}'>0$, and we find
\begin{align}\label{I3 bound}
I_{2}(0) \leq \sum_{\mathsf{h}=1}^{n-1}I_{3}(\mathsf{h}) \leq 2^{\beta \frac{n(n-1)}{2}}e^{ \sqrt{n} M(tg)}\sqrt{n}\bigg( \frac{8\pi}{\beta n} \bigg)^{\frac{n-1}{2}} \exp(-2c_{3}n^{2\epsilon}) \leq \exp(-c_{3}n^{2\epsilon}) I_{1},
\end{align}
for some $c_{3}>0$ and all sufficiently large $n$. By \eqref{I tilde}, \eqref{first bound}, \eqref{I2 bound}, \eqref{I3 bound}, we have 
\begin{align}\label{lol1}
I(\tfrac{t}{\sqrt{n}}g)-I_{1} \leq \exp(-c_{4}n^{2\epsilon}) I_{1}
\end{align}
for some $c_{4}>0$ and all sufficiently large $n$. Theorem \ref{thm:main} for $g \not\equiv 0$ now follows from \eqref{I1 asymp}. From \eqref{expansion of g}, \eqref{some error in the proof} and \eqref{I1prim asymp}, we see that if $g \equiv 0$ then $\bigO(n^{-\frac{q}{2}+(1+q) \epsilon}+n^{-1+8\epsilon})$ in \eqref{I1 asymp} can be replaced by $\bigO(n^{-1+8\epsilon})$. This proves Theorem \ref{thm:main} for $g \equiv 0$.

\medskip Furthermore, for $g \equiv 0$, by definition of $I_{1}$ we have
\begin{align*}
\frac{I_{1}}{I(0)} & = \mathbb{P}\bigg( |\theta_{j}-\theta_{n}|\leq n^{-\frac{1}{2}+\epsilon} \mbox{ for all } j  \in  \{1,\ldots,n-1\} \bigg) \\
& \leq \mathbb{P}\bigg( |e^{i\theta_{j}}-e^{i\theta_{n}}|\leq n^{-\frac{1}{2}+\epsilon} \mbox{ for all } j  \in  \{1,\ldots,n-1\} \bigg).
\end{align*}
Hence Theorem \ref{thm:prob} directly follows from $I(0)-I_{1} \leq \exp(-c_{4}n^{2\epsilon}) I_{1}$ (which is \eqref{lol1} with $g\equiv 0$).

\medskip \noindent \textbf{Data availability statement.} There is no data associated to this work.

\medskip \noindent \textbf{Conflict of interest statement.} There is no conflict of interest.

\paragraph{Acknowledgements.} The author is grateful to Brendan McKay for useful remarks and for pointing out \cite{IsaevMcKay}. The author is also grateful to two anonymous referees for their careful reading and excellent remarks. Support is acknowledged from the Swedish Research Council, Grant No. 2021-04626. 

\footnotesize


\begin{thebibliography}{99}
\bibitem{Bill} P. Billingsley,  Probability and measure. Anniversary edition. Wiley Series in Probability and Mathematical Statistics. John Wiley \& Sons, Inc., New York, 2012. 

\bibitem{Boursier} J. Boursier, Optimal local laws and CLT for 1D long-range Riesz gases, arXiv:2112.05881.

\bibitem{C ReflectionLine} C. Charlier, A point process on the unit circle with mirror-type interactions, arXiv:2212.06777.

\bibitem{DVJ2008} D.J. Daley and D. Vere-Jones, \textit{An introduction to the theory of point processes}, General theory and structure. Second edition Probab. Appl, Springer, New York, 2008.

\bibitem{For} P.J. Forrester, \textit{Log-gases and random matrices}, London Mathematical Society Monographs Series, 34. Princeton University Press, Princeton, NJ, 2010. 

\bibitem{IsaevMcKay} M. Isaev and B.D. McKay, Complex martingales and asymptotic enumeration, \textit{Random Structures Algorithms} \textbf{52} (2018), no. 4, 617--661. 

\bibitem{Jo1988} K. Johansson, On Szeg\H{o}'s asymptotic formula for Toeplitz determinants and generalizations, \textit{Bull. Sci. Math.} (2) \textbf{112} (1988), no. 3, 257--304.

\bibitem{McKay} B.D. McKay, The asymptotic numbers of regular tournaments, Eulerian digraphs and Eulerian oriented graphs,
\textit{Combinatorica} 10 (1990), no. 4, 367--377. 

\bibitem{McKayWormald} B.D. McKay and N.C. Wormald, Asymptotic enumeration by degree sequence of graphs of high degree, \textit{European J. Combin.} \textbf{11} (1990), no. 6, 565--580. 

\end{thebibliography}
\end{document}